\newtheorem{theorem}{Theorem}
\newtheorem{definition}{Definition}
\newtheorem{corollary}[theorem]{Corollary}
\newtheorem{proposition}[theorem]{Proposition}
\theoremstyle{definition}
\newtheorem{remark}{Remark}
\def \mb{\mathbb}
\def \mk{\mathfrak}
\def \R{\mb R}                 
\def \C{\mb C}                 
\def \a{\alpha}         
\def \b{\beta}           
\def \D{\Delta}         
\def \th{\theta}       
\def \la{\langle}        
\def \ra{\rangle}       
\newcommand {\diag} {\text{diag}}
\newcommand {\sign} {\text{sign}}
\newcommand {\q} {\mathbf{q}}
\newcommand {\0} {\mathbf{0}}
\newcommand {\cM} {\mathcal{M}}
\newcommand {\cS} {\mathcal{S}}
\newcommand{\EQ}[1]{\begin{equation}\begin{split} #1 \end{split}\end{equation}}
\title{Odd index of the amended potential implies linear instability}
\begin{document}
\maketitle
\markboth{Yanxia Deng,  Shuqiang Zhu}{Odd index of the amended potential implies linear instability}
\author{\begin{center}
		{Yanxia Deng$^1$  and Shuqiang Zhu$^2$}\\
		\ \ \ \ \ \ \ \ \ \ \ \ 	\ \ \ \ \ \ \ \ \ \ \ \ 	\ \ \ \ \ \ \ \ \ \ \ \ 	\ \ \ \ \ \ \ \ \ \ \ \ 		\ \ \ \ \ \ \ \ \ \ \ \ 
		
		\bigskip
       $^1$ School of Mathematics (Zhuhai), Sun Yat-sen University, Zhuhai, Guangdong, China \quad dengyx53@mail.sysu.edu.cn\\
		$^2$School of Mathematical Sciences, University of Science and Technology of China, Hefei,  China \quad zhus@ustc.edu.cn
	\end{center}}

  \begin{abstract}
 For a relative equilibrium of a symmetric simple mechanical system, if the Morse index of the corresponding amended potential is odd, whether the nullity is zero or not,  it is linearly unstable. We also provide a sufficient condition for spectral instability.
  \end{abstract}
  \vspace{2mm}
  
  \textbf{Key Words:}  Linear stability; spectral stability; reduced energy-momentum method; spectral flow; relative Morse index. 
  \vspace{8mm}

\section{Introduction}\label{sec:intro}
A general mechanical system with a Hamiltonian that is the sum of the kinetic and potential energy is said to possess symmetry if the Hamiltonian function is invariant under the action of a group acting on the canonical phase space by canonical transformations. Following \cite{SLM91}, we call it a \emph{symmetric simple mechanical system}. In particular, let $Q$ be a Riemannian \emph{configuration manifold}, and the cotangent bundle $P=T^*Q$ be the \emph{canonical phase space} endowed with the \emph{canonical symplectic two-form}. (cf. \cite{Arn91}\cite{Mar92}\cite{SLM91}).

Let $H(q,p)=\frac{1}{2}\| p\|^2_q +V(q)$ be the Hamiltonian $H: P\to\R$, and suppose there is a group $G$ acting on $Q$ by isometries that leaves $V$ invariant. Here $\|\cdot \|_q$ is the norm induced on $T_q^*Q$. 
Let $\mk g$ (resp. $\mk g^*$) be the Lie algebra (resp. dual of Lie algebra) of $G$. Consider a relative equilibrium $(q_e, p_e)\in T^*Q$ with velocity $\xi\in \mk g$ and momentum value $\mu = \mathbf J(q_e, p_e)\in\mk g^*$, and let $G_\mu$ be the stabilizer of $\mu \in\mk g^*$ for the coadjoint action of $G$ in $\mk g^*$. We refer the reader to section 1 of \cite{SLM91} for a detailed introduction of the relative equilibrium.

A central problem is the stability analysis of the relative equilibrium. For example, the Lyapunov stability of the corresponding equilibrium $z_e$  in the reduced system $\mathbf{J}^{-1}(\mu)/G_\mu$ is guaranteed if it is a local minimum of the \emph{augmented Hamiltonian}, $H_\xi:=H-\la \mathbf{J}, \xi\ra$, where $\la\cdot,\cdot\ra$ is the duality paring between $\mk g^*$ and $\mk g$.

 By the energy-momentum method,  to check the local minimality of $z_e$ for $H_\xi$, it suffices to check the 
positive definiteness of the Hessian $\delta^2 H_\xi$ on a subspace $N$ of  $T_{(q_e, p_e)} \mathbf{J}^{-1}(\mu)$, where $N$ is traversal to the submanifold $G_\mu(q_e, p_e)$. The reduced energy-momentum method introduced in \cite{SLM91}  provides an easy way to check the definiteness of $\delta^2 H_\xi|_{N}$.

	In particular, for each $q\in Q$, let the \emph{locked inertia tensor} be the map $\mathbb I: \mk g\to \mk g^*$ defined by 
\[  \la  \mathbb I \eta, \xi\ra = (\eta_Q(q), \xi_Q(q)),    \]
where $(,)$ is the Riemannian metric on $Q$, $\eta_Q$ is the \emph{infinitesimal generator} on $Q$ induced by $\eta$. The \emph{amended potential} is  a function on $Q$ defined by
\begin{equation*}
V_\mu(q):=V(q)+\frac{1}{2}\la  \mu, \mathbb I^{-1} \mu\ra. 
\end{equation*}	
For any $(q, p)\in \mathbf{J}^{-1}(\mu)$,  the augmented Hamiltonian can be written as 
\[ H_\xi =K_\mu(p) + V_{\mu} (\q),   \]
with $K_\mu(p)$  being the \emph{amended kinetic energy}. 	

By specially chosen coordinates of $\mathbf{J}^{-1}(\mu)$, one has that $\delta^2 H_\xi$ is block-diagonal and $\delta^2 K_\mu$ is positive definite. Thus, the positive definiteness of $\delta^2 H_\xi$ is equivalent to that of $\delta^2 V_\mu$ on a subspace $\mathcal V$ of $T_{q_e}Q$. 
In particular, let $\mk g_\mu\cdot q_e$ be the tangent space of the orbit $G_\mu q_e$ in $Q$  at $q_e$, one can take $\mathcal V:=(\mk g_\mu\cdot q_e)^\bot$, the orthogonal complement of  $\mk g_\mu\cdot q_e$ with respect to the Riemannian metric. In section \ref{sec:rmk}, we provide an example of the above formulations to the n-body-type problem. 


More generally, when the positive definiteness of $\delta^2 V_\mu$ fails, we consider the linearization of the Hamiltonian vector filed $X_{H_\xi}$ at $z_e$ and study the linear stability of $z_e$. The Hamiltonian vector field is give by $\Omega\delta^2 H_\xi|_N$ with $\Omega$ an invertible skew-symmetric matrix. When $\delta^2 H_\xi|_N$, or equivalently, $\delta^2 V_\mu|_{\mathcal V}$, is invertible, it is known that if the Morse index of $\delta^2 V_\mu|_{\mathcal V}$ is odd then $z_e$ is spectrally unstable, see Proposition \ref{pro:original}. 

The next question is: what can we say about the stability of $z_e$ if $\delta^2 V_\mu$  is not invertible? We confirm in this paper that odd Morse index of $\delta^2 V_\mu$, regardless of the invertibility of $\delta^2 V_\mu$, leads to the linear instability of $z_e$.      To state the main theorem, let's recall the basic notions of the symplectic vector space. 

A symplectic vector space is a  pair $(\R^{2n}, \omega)$, where $\omega$ is a nondegenerate, skew-symmetric, bilinear form. Fixing a basis, the bilinear form $\omega $ is represented by an invertible  skew-symmetric matrix $\Omega$. If $\Omega=J:=\begin{bmatrix}
\0 &-I\\I &\0
\end{bmatrix}$,  the basis is symplectic. 

\begin{definition}
The matrix $JB\in\R^{2n\times 2n}$ is \emph{spectrally stable} if all of its eigenvalues are on the imaginary axis; it is \emph{linearly stable} if it is spectrally stable and semi-simple. It is \emph{linearly unstable} if it is not linearly stable.
\end{definition}

Our main result is:
\begin{theorem}\label{thm:Main}
Let $B\in\R^{2n\times 2n}$ be a symmetric matrix. If the nullity or the Morse index of $B$ is odd, then $JB$ is linearly unstable.
\end{theorem}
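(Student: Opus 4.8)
The plan is to argue by contradiction: assume $JB$ is linearly stable and derive a parity constraint on both the nullity and the Morse index of $B$ that is violated in each of the two cases. The starting point is that $A:=JB$ is infinitesimally symplectic, $A^TJ+JA=0$, so its characteristic polynomial $p(\lambda)=\det(\lambda I-A)$ is an even function of $\lambda$; writing $p(\lambda)=q(\lambda^2)$ with $q$ monic of degree $n$, the spectrum is symmetric under $\lambda\mapsto-\lambda$ and the algebraic multiplicity $m$ of the eigenvalue $0$ is even. On the other hand the geometric multiplicity of $0$ is $\dim\ker(JB)=\dim\ker B$, the nullity $k$ of $B$. If $JB$ were semisimple these two numbers would coincide, forcing $k=m$ to be even; this already disposes of the case of odd nullity.

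For the Morse index I would use the $A$-invariant decomposition $\R^{2n}=E_0\oplus E_\ast$, where $E_0$ is the generalized $0$-eigenspace (dimension $m$) and $E_\ast$ is the sum of the generalized eigenspaces for the nonzero eigenvalues. Because $A$ is Hamiltonian, generalized eigenspaces attached to $\lambda$ and $\nu$ are $\omega$-orthogonal unless $\nu=-\lambda$, so $E_0$ and $E_\ast$ are $\omega$-orthogonal symplectic subspaces. Using $B=-JA$ one checks $x^TBy=-\omega(x,Ay)$ (with $\omega(u,v)=u^TJv$), which shows that $E_0$ and $E_\ast$ are also $B$-orthogonal; hence $\ind(B)=\ind(B|_{E_0})+\ind(B|_{E_\ast})$. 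Linear stability now enters twice. Semisimplicity forces the nilpotent part $A|_{E_0}$ to vanish, so $B|_{E_0}=0$ and $\ind(B|_{E_0})=0$; spectral stability forces all nonzero eigenvalues onto the imaginary axis, so the product of the nonzero eigenvalues of $A$, which equals $\det\bigl((JB)|_{E_\ast}\bigr)$, is positive.

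The crux is to convert that positivity into a parity statement about $\ind(B|_{E_\ast})$. In a symplectic basis of $E_\ast$ the restriction $(JB)|_{E_\ast}$ takes the form $J_\ast S$ with $S$ symmetric and equal to the Gram matrix of the quadratic form $x\mapsto x^TBx$ restricted to $E_\ast$; since $\det J_\ast=1$, we get $\det\bigl((JB)|_{E_\ast}\bigr)=\det S$, whose sign is $(-1)^{\ind(B|_{E_\ast})}$. Positivity then gives $\ind(B|_{E_\ast})$ even, and combined with $\ind(B|_{E_0})=0$ we conclude $\ind(B)$ is even, contradicting odd Morse index. Both cases of the theorem thus reduce to this single invariant-subspace bookkeeping.

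I would expect the identification in the last paragraph — that the restricted Hamiltonian operator is $J_\ast$ times the restricted Hessian, so that $\det\bigl((JB)|_{E_\ast}\bigr)$ carries the parity of $\ind(B|_{E_\ast})$ — to be the main technical obstacle, since it requires setting up the symplectic normal form on $E_\ast$ and matching it with the form $B$. A natural first attempt, perturbing to $B+sI$ in order to invoke the invertible case of Proposition \ref{pro:original}, founders precisely here: the positive real eigenvalue of $J(B+sI)$ produced for small $s$ may collapse to $0$ as $s\to0$, so one cannot read off spectral instability of $JB$ directly, and the degenerate eigenvalue $0$ must instead be analyzed by the decomposition above.
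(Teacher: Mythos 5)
Your proposal is correct and follows essentially the same strategy as the paper's second (elementary) proof: split off $\ker B$, observe that under linear stability its complement is a $J$- and $B$-invariant symplectic subspace on which $B$ is nondegenerate, and conclude from the positivity of $\det\bigl((JB)|_{E_\ast}\bigr)=\det S$, whose sign is $(-1)^{\mathcal M(B|_{E_\ast})}$, that the index there is even --- exactly the content of Propositions \ref{pro:original} and \ref{pro:spec}. The only cosmetic differences are that you reach the decomposition through the generalized eigenspaces of the Hamiltonian matrix $JB$ and deduce evenness of the nullity from the evenness of its characteristic polynomial, whereas the paper works directly with the Euclidean orthogonal complement of $\ker B$ and the fact that $\ker B$ is itself a symplectic subspace.
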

{Note that the result also holds if $J$ is replaced by an arbitrary invertible skew-symmetric matrix $\Omega$. There is always an invertible  matrix $Q$ such that $\Omega=QJQ^T$, which leads to 
$\Omega B=Q(JQ^TB Q)Q^{-1}$. Thus  the stability of $\Omega B$ equals that of $JQ^TB Q$. Since $\nu (Q^TB Q )=\nu (B)$ and $\mathcal M (Q^TB Q )=\mathcal M(B)$, where $\nu$ is the nullity and $\mathcal M$ is the Morse index (i.e. number of negative eigenvalues counting multiplicity) of a real symmetric matrix, so we can replace $J$ by $\Omega$ in the above result. 
}

This theorem generalizes the result in \cite{HS09-1} of the relative equilibrium in N-body problem to a symmetric simple mechanical system. A similar result has been claimed in \cite{BJP14}.  
However, there is a flaw in their proof, see our discussion in Section \ref{sec:rmk}. 
We also obtain a sufficient condition for spectral instability, see remark \ref{rk:sp_in}. An application of Theorem \ref{thm:Main} to the symmetric simple mechanical system is given below.
\begin{corollary}\label{cor:Main}
	Let $(q_e, p_e)$ be a relative equilibrium of a symmetric simple mechanical system. If  the nullity or the Morse index of $\delta^2 V_\mu|_{\mathcal V}$ is odd, then the corresponding equilibrium $z_e$ of $\mathbf{J}^{-1}(\mu)/G_\mu$  is linearly unstable.
\end{corollary}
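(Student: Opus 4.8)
\emph{Reduction of the corollary.} The plan is to obtain Corollary~\ref{cor:Main} as an immediate consequence of Theorem~\ref{thm:Main}, so the real substance lies in the theorem itself. By the construction recalled above, the linearization of $X_{H_\xi}$ at $z_e$ is $\Omega\,\delta^2 H_\xi|_N$ for some invertible skew-symmetric $\Omega$, and in the adapted coordinates $\delta^2 H_\xi|_N$ is block-diagonal, equal to $\delta^2 K_\mu \oplus \delta^2 V_\mu|_{\mathcal V}$ with $\delta^2 K_\mu$ positive definite. A positive-definite block contributes neither to the kernel nor to the negative eigenvalues, so $\nu(\delta^2 H_\xi|_N)=\nu(\delta^2 V_\mu|_{\mathcal V})$ and $\mathcal M(\delta^2 H_\xi|_N)=\mathcal M(\delta^2 V_\mu|_{\mathcal V})$. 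Hence if either the nullity or the Morse index of $\delta^2 V_\mu|_{\mathcal V}$ is odd, the same holds for $B:=\delta^2 H_\xi|_N$, and Theorem~\ref{thm:Main}, in the form allowing an arbitrary invertible skew-symmetric $\Omega$ (as in the remark following it), shows that $\Omega B$ is linearly unstable; that is, $z_e$ is linearly unstable.

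\emph{Proof of Theorem~\ref{thm:Main}.} I would argue through the structure theory of the Hamiltonian matrix $A=JB$, which satisfies $A^T J + JA=0$: over $\mathbb C$ the generalized eigenspaces $E_\lambda$ and $E_\mu$ are $\omega$-orthogonal unless $\lambda+\mu=0$, so the generalized $0$-eigenspace $E_0$ is a symplectic subspace and in particular $\dim E_0$ is even. For the nullity case, since $J$ is invertible we have $\ker(JB)=\ker B$, so the geometric multiplicity of the eigenvalue $0$ is $\nu(B)$ while its algebraic multiplicity is $\dim E_0$, an even number. If $\nu(B)$ is odd it is strictly smaller than this even algebraic multiplicity, which forces a nontrivial Jordan block at $0$; then $JB$ is not semisimple and hence linearly unstable. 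This case needs no stability assumption.

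For the Morse-index case, assume $\mathcal M(B)$ is odd and suppose, for contradiction, that $JB$ is linearly stable, i.e. semisimple with purely imaginary spectrum. Semisimplicity gives $E_0=\ker(JB)=\ker B$, which is therefore symplectic; let $E_0^\omega$ be its symplectic complement. Because $B$ vanishes on $\ker B=E_0$, one has $x^T B y=0$ whenever $x$ or $y$ lies in $E_0$, so in a symplectic basis adapted to $E_0\oplus E_0^\omega$ both $J$ and $B$ are block-diagonal and $B=0\oplus B_2$ with $B_2:=B|_{E_0^\omega}$ invertible. Then $\mathcal M(B_2)=\mathcal M(B)$ is odd, while $J_2B_2$ is an invertible Hamiltonian matrix whose spectrum is contained in that of $JB$ and hence purely imaginary, so $J_2B_2$ is spectrally stable. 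This contradicts Proposition~\ref{pro:original}, which in matrix form states that an invertible symmetric matrix of odd Morse index produces a spectrally unstable $J_2B_2$; therefore $JB$ is linearly unstable.

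I expect the main obstacle to be precisely the Morse-index case when $\nu(B)>0$, since Proposition~\ref{pro:original} is available only for invertible matrices. The crux is to peel the kernel off symplectically: the two facts that make this work are that linear stability forces $E_0=\ker B$ to be a genuine symplectic subspace and that the very same decomposition simultaneously block-diagonalizes $B$, thereby reducing the question to the already known invertible case.
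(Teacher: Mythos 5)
Your reduction of Corollary~\ref{cor:Main} to Theorem~\ref{thm:Main} is exactly the paper's: the linearization is $\Omega\,\delta^2 H_\xi|_N$ with $\delta^2 H_\xi|_N$ block-diagonalized by the reduced energy--momentum method into a positive-definite kinetic block plus $\delta^2 V_\mu|_{\mathcal V}$, so the nullity and Morse index pass through unchanged and the remark allowing a general invertible skew-symmetric $\Omega$ finishes it. Your proof of Theorem~\ref{thm:Main} itself is correct but reaches the key structural facts by a different route. Where the paper's elementary proof establishes by hand (Proposition~\ref{pro:V-invariant}) that a non-$J$-invariant kernel forces a nontrivial Jordan block and that a $J$-invariant kernel is even-dimensional, you invoke the standard $\omega$-orthogonality of generalized eigenspaces $E_\lambda$, $E_\mu$ of the Hamiltonian matrix $JB$ for $\lambda+\mu\neq 0$, conclude that $E_0$ is symplectic and even-dimensional, and compare geometric with algebraic multiplicity at $0$; this handles the odd-nullity case cleanly and without any stability hypothesis, just as the paper's argument does. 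In the odd-Morse-index case you then split off $\ker B$ using its symplectic complement $E_0^\omega$ and reduce to Proposition~\ref{pro:original} on the invertible block; this is the same mechanism as the paper's second proof (which uses the Euclidean orthogonal complement $V^\perp$ and observes it coincides with the skew-orthogonal one, then applies Proposition~\ref{pro:spec}), and indeed the two complements are the same subspace. What your version buys is a more conceptual justification of why the kernel peels off symplectically (structure theory of infinitesimally symplectic matrices); what the paper's version buys is complete self-containedness, proving the needed facts from scratch with explicit vectors. One small point worth making explicit in your write-up: the invertibility of $B_2=B|_{E_0^\omega}$ follows because the radical of the form $x^TBy$ is exactly $\ker B=E_0$, which meets $E_0^\omega$ trivially; you use this but do not state it.
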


\section{Proof  of the main result}
In this section, we give two proofs of Theorem \ref{thm:Main}. One method uses the spectral flow and relative Morse index, the other method is more elementary.


\subsection{The first proof by spectral flow and relative Morse Index}
See  Section 2.1 of \cite{HS09}  for more details and history of spectral flow and the relative Morse Index. Here we only summarize it for $\R^{2n}$ and $\C^{2n}$ with a (Hermitian) inner product. Let $\{A(\th), \, \th\in[0,1]\}$ be a continuous path of self-adjoint matrices. Roughly speaking, the spectral flow of path
$\{A(\th), \, \th\in[0,1]\}$ counts the net change in the number of negative eigenvalues of $A(\th)$ as $\th$ goes from $0$ to $1$, where the enumeration follows from the rule that each negative eigenvalue crossing to the positive axis contributes $+1$ and each positive eigenvalue crossing to the negative axis contributes $-1$, and for each crossing the multiplicity of eigenvalue is counted. In particular, when eigenvalue crossing occurs at $A(\th)$, the operator 

\EQ{\frac{\partial }{\partial\th}A(\th): \, \ker(A(\th)) \to \ker(A(\th))} is called a \emph{crossing operator}, denoting by $Cr[A(\th)]$. An eigenvalue crossing at $A(\th)$ is said to be regular if the null space of $Cr[A(\th)]$ is trivial. 

We denote the signature of $Cr[A(\th)]$ as 
\EQ{\sign Cr[A(\th)]=\dim E_+(Cr[A(\th)])-\dim E_-(Cr[A(\th)]),} where $E_\pm(Cr[A(\th)])$ is the positive and negative definite subspace for $Cr[A(\th)]$. 

Suppose that all crossings are regular. Let $\cS$ be the set of $\th\in[0,1]$ at which the crossing occurs. Then $\cS$ contains only finitely many points. The spectral flow of $\{A(\th), \, \th\in[0,1]\}$ is
\EQ{Sf(\{A(\th), \, \th\in[0,1]\}):=&\sum_{\th\in\cS_*}\sign Cr[A(\th)]-\dim E_-(Cr[A(0)])\\&+\dim E_+(Cr[A(1)]),} where $\cS_*=\cS\setminus\{0,1\}$.

Denote $A=A(0)$ and $\bar{A}=A(1)$, then 
\begin{definition}[Relative Morse Index]
	The relative Morse index of $\bar{A}$ with respect to $A$ is defined to be the spectral flow
	\EQ{\cM(A, \bar{A})=-Sf(\{A(\th), \, \th\in[0,1]\})}
\end{definition}
\begin{remark}
	When $A=0$, $\cM(0, \bar{A})$ is the usual Morse index of $\bar{A}$,  $\cM(\bar{A})$. 
\end{remark}

\begin{remark}
	One has $\cM(A, \bar{A})+\cM(\bar{A}, \tilde{A})=\cM(A, \tilde{A})$. In particular, let $A=0,$ we get \EQ{\cM(\bar{A}, \tilde{A})=\cM(\tilde{A})-\cM(\bar{A}).}
\end{remark}

We use spectral flow and Morse index to prove our main theorem following \cite{HS09-1}. Let $G=\sqrt{-1}J$, which induces the Krein form on $\C^{2n}$. Some useful facts: 
\begin{itemize} 
	\item $\lambda\in\sigma(JB)$ if and only if $\ker(B+\lambda J)\neq 0$.  
	\item $\sqrt{-1}s\in\sigma(JB)$ if and only if $\ker(B+sG)\neq 0$.
	\item $\sqrt{-1}s\in\sigma(JB)$ if and only if $-s\in\sigma(GB)$.
\end{itemize}

\begin{proof} [The first proof of Theorem \ref{thm:Main}] 
	Assume that $JB$ is linearly stable. Consider self-adjoint matrices $D_s:=B+sG,\, s\in [0,\infty)$. Note that $G: \ker(D_{s})\to \ker(D_{s})$ is the crossing operator for the path $D_s$. 
	
	Since $\ker(D_0)=\ker(B)=\ker(JB)$, semi-simplicity of $JB$ implies that $\dim\ker(D_0)$ is even, the restriction of $G$ on $\ker(D_0)$ is non-degenerate, and its signature on $\ker(D_0)$ is zero. 
	
	There exists $\epsilon>0$ so that there is no crossing for $s\in (0, \epsilon]$, i.e. there are no eigenvalues of $GB$ in $(0, \epsilon]$. For $s$ large enough, $D_s$ is non-degenerate and its signature is the same as that of $G$, which is zero. 
	
	Let $\kappa$ be the number of total multiplicities of eigenvalues of $GB$ on the interval $[\epsilon, \infty)$. By the assumption that $JB$ is linearly stable (in particular, semi-simple), we have
	\EQ{\label{eq:sf1}n=\kappa+\frac{1}{2}\nu(GB)=\kappa+\frac{1}{2}\nu(B).}
	
	At each crossing $\lambda_0\in [\epsilon, \infty)$, that is, $\ker(D_{\lambda_0})\neq0$, $G$ restricted to $\ker(D_{\lambda_0})$ is non-degenerate. Thus \[\sign(G|_{\ker(D_{\lambda_0})})=\dim\ker(D_{\lambda_0})\quad\text{mod}\, 2.\]
	Since the left side is the difference of the dimensions of positive and negative definite subspace of $G$ on $\ker(D_{\lambda_0})\neq0$, and the right side is the sum.
	
	Taking the sum over all crossings $\lambda_0\in[\epsilon, \infty)$ and assuming $s_1>0$ is large enough so that $\ker(D_{s_1})=0$, we have 
	\EQ{\cM(D_0, D_{s_1})&=-\sum_{\lambda_0\geq\epsilon}\sign(G|_{\ker(D_{\lambda_0})})+\dim E_-(G|_{\ker(D_0)})\\&-\dim E_+(G|_{\ker(D_{s_1})})\\&=-\sum_{\lambda_0\geq\epsilon}\sign(G|_{\ker(D_{\lambda_0})})+\frac{1}{2}\nu(D_0)\\&=-\sum_{\lambda_0\geq\epsilon}\dim\ker(D_{\lambda_0})+\frac{1}{2}\nu(B)\quad\text{mod}\, 2\\&=-\kappa+\frac{1}{2}\nu(B)\quad\text{mod}\, 2}
	The last equality again used the semi-simplicity of $JB$. Therefore,
	\EQ{n-\cM(B)&=\cM(D_0, D_{s_1})\\&=-\kappa+\frac{1}{2}\nu(B)\quad\text{mod}\, 2\\&=\nu(B)-n\quad\text{mod}\, 2, \quad \text{by}\, (\ref{eq:sf1})} implying that $\cM(B)$ is even. Contradiction.
\end{proof}

\subsection{The second proof by  elementary method}\label{sec:eleproof}

\begin{proposition}\label{pro:V-invariant}
Let $V:=\ker(B)$.	If $V$ is not $J$ invariant, then $JB$ is not semi-simple. If $V$ is $J$ invariant, then $V$ is even-dimensional. 
\end{proposition}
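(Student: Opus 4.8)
\textbf{Proof proposal for Proposition \ref{pro:V-invariant}.}

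The plan is to analyze the two claims separately, using the self-adjointness of $B$ and the skew-symmetry of $J$ together with the standard relation $J^2=-I$. Throughout I work with the standard inner product, so $B=B^T$ and $J^T=-J$, and I recall that $JB$ is semi-simple precisely when $\R^{2n}$ decomposes as a direct sum of eigenspaces of $JB$ (over $\C$ if necessary), equivalently when the minimal polynomial of $JB$ has no repeated roots; in particular the generalized kernel of $JB$ must coincide with $\ker(JB)$.

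First I would establish the contrapositive-friendly statement for the non-invariant case. Suppose $V=\ker(B)$ is not $J$-invariant, so there exists $v\in V$ with $Jv\notin V$, i.e. $BJv\neq 0$. The key observation is that $\ker(B)=\ker(JB)$ since $J$ is invertible, and I want to exhibit a vector in the generalized kernel of $JB$ that is not in $\ker(JB)$, which forces $JB$ to have a nontrivial Jordan block at the eigenvalue $0$ and hence to fail semi-simplicity. The natural candidate comes from applying $JB$ to $Jv$: since $v\in\ker(B)$, I would compute $(JB)(Jv)=JBJv$ and then $(JB)^2(Jv)=JBJBJv$. The crucial step is to show $(JB)^2(Jv)=0$ while $(JB)(Jv)\neq 0$, so that $Jv$ lies in the generalized $0$-eigenspace but not the ordinary kernel. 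To see $(JB)^2 Jv=0$, I would use that $BJv$, being the image under $B$, pairs appropriately: more concretely, for any $w\in\ker(B)$ one has $w^T B J v = (Bw)^T Jv = 0$, which says $BJv$ is orthogonal to $\ker(B)$, hence lies in the range of $B$; I then track how $JB$ acts and use $v\in\ker(B)$ to collapse the second application. This is the step I expect to require the most care, and the cleanest route may be to argue directly that the restriction of the symmetric bilinear form $w\mapsto w^T (BJ) w$ or the operator $JB$ has a nontrivial nilpotent part on the subspace spanned by $v$ and $Jv$.

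For the second claim, assume $V=\ker(B)$ \emph{is} $J$-invariant. Then $J$ restricts to a linear map $J|_V:V\to V$ satisfying $(J|_V)^2=-I_V$, which endows $V$ with a complex structure; any real vector space admitting a complex structure must have even real dimension, since $\det(J|_V)^2=\det(-I_V)=(-1)^{\dim V}$ must be a nonnegative square, forcing $\dim V$ to be even. Equivalently, the eigenvalues of $J|_V$ come in conjugate pairs $\pm\sqrt{-1}$, so $\dim V$ is even. This half is short once invariance is assumed.

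The main obstacle is the first part: verifying that $Jv$ (or a suitable combination) genuinely sits in the generalized kernel with a nonzero single application of $JB$, i.e. producing an honest rank-two Jordan block rather than accidentally landing back in $\ker(B)$. I would guard against the degenerate sub-cases by choosing $v\in\ker(B)$ with $Jv\notin\ker(B)$ as far as possible and, if needed, projecting off any component of $Jv$ that already lies in $\ker(B)$, so that the constructed chain $0\neq Jv\pmod{\ker B}$ maps under $JB$ into $\ker(B)$, exhibiting the failure of semi-simplicity cleanly.
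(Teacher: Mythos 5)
Your second claim is handled correctly: once $V$ is $J$-invariant, $J|_V$ is a complex structure on $V$ and $\dim V$ must be even (the paper phrases the same fact via the induced nondegenerate skew form making $V$ symplectic; the two arguments are equivalent). The first claim, however, has a genuine gap, and it sits exactly at the step you yourself flagged as the main obstacle: the assertion that $(JB)^2(Jv)=0$ for $v\in\ker(B)$ with $Jv\notin\ker(B)$ is false in general. Take $n=2$, the standard $J$ with $Je_k=e_{k+2}$ and $Je_{k+2}=-e_k$ for $k=1,2$, and the symmetric matrix $B$ determined by $Be_1=0$, $Be_2=e_3$, $Be_3=e_2$, $Be_4=e_4$. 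Then $V=\ker(B)=\mathrm{span}\{e_1\}$ is not $J$-invariant, $v=e_1$, $Jv=e_3$, and one computes $JB(Jv)=Je_2=e_4$ and $(JB)^2(Jv)=JBe_4=Je_4=-e_2\neq 0$; in fact $e_3\mapsto e_4\mapsto -e_2\mapsto e_1\mapsto 0$ is a single Jordan chain of length four. The reason the step cannot be repaired as proposed is that $(JB)^2(Jv)=0$ requires $JBJv\in\ker(B)$, i.e. $BJv\in J(\ker B)$, whereas what you actually establish is only that $BJv$ lies in the range of $B$, i.e. in $(\ker B)^{\perp}$; nothing forces it into $J(\ker B)$. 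Projecting $Jv$ off $\ker(B)$ does not help either: in the example $Jv=e_3$ already has no component in $\ker(B)$.

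The paper argues from the other end: with $W=(\ker B)^{\perp}=\mathrm{range}(B)$, it takes a nonzero $w_1\in JV\cap W$ and sets $w=(B|_W)^{-1}w_1$, so that $JBw=Jw_1\in J(JV)=V$ automatically; hence $(JB)^2w=0$ while $JBw\neq0$ and $w\notin\ker(JB)$. In the example above this yields the correct chain starting from $w=e_2$ rather than from $Jv=e_3$. The condition genuinely needed to produce a Jordan block at $0$ is therefore $JV\cap W\neq\{0\}$, equivalently $\ker(JB)\cap\mathrm{range}(JB)\neq\{0\}$, and you should be aware that this is strictly stronger than $JV\neq V$: for $V=\mathrm{span}\{e_1,e_2+e_3\}\subset\R^4$ one checks $JV=\mathrm{span}\{e_3,e_4-e_1\}\neq V$ yet $JV\cap V^{\perp}=\{0\}$, and taking $B$ to be the orthogonal projection onto $V^{\perp}$ gives a $JB$ with characteristic polynomial $\lambda^2(\lambda^2+\tfrac12)$ and two-dimensional kernel, hence diagonalizable over $\C$. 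So the gap is not closable by more care along your route; it is also the point the paper's own proof passes over when it asserts $W_1\neq\{0\}$ without justification, and the existence of a nonzero vector in $JV\cap W$ is where the real content of the first claim lies.
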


\begin{proof}
	Let $W$ be the orthogonal complement of $V$ under the Euclidean inner product of $\R^{2n}$. 	Then $B$ is an isomorphism restricted on $W$. 	If $V$ is not $J$ invariant, we assume that  $JV=V_1\oplus W_1$, with $V_1$ (resp. $W_1$) being a subspace of $V$ (resp. $W$) and $W_1\ne \{ 0\}$. 
	Then there exists  some nonzero element $w\in W$ such that   $Bw\in W_1$.  Since $w\notin V$, we have $JBw\ne 0$. Since $J^2=-I$, 
$J^2 V=J(V_1\oplus W_1)=V$, we have $J W_1\subset V$. Then we have $(JB)^2w=JB(JBw)=0$. Therefore, $JB$ is not semi-simple. 
	
	If 	$V$ is $J$ invariant, we have $JV=V$ since $J$ is invertible.  Restricted on $V$, $J$ induces a skew-symmetric, nondegenerate bilinear form. Then $V$ becomes a symplectic subspace, whose dimension must be even. 
\end{proof}

\begin{proposition}\label{pro:original}
	Assume that $B$ is invertible and $\Omega$ is an invertible skew-symmetric matrix. If  $\Omega B$ is spectrally stable, then the Morse index of $B$ is even. 
\end{proposition}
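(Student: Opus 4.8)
The plan is to prove the statement directly by a determinant (sign) computation, rather than invoking the spectral-flow machinery used in the first proof. The starting observation is the factorization $\det(\Omega B)=\det(\Omega)\,\det(B)$. Since $\Omega$ is a skew-symmetric invertible matrix of even size $2n$, its determinant is the square of its Pfaffian, so $\det(\Omega)=\mathrm{Pf}(\Omega)^2>0$. Hence the sign of $\det(B)$ coincides with the sign of $\det(\Omega B)$, and it suffices to show that spectral stability forces $\det(\Omega B)>0$. Comparing this with the identity $\det(B)=(-1)^{\mathcal M(B)}\lvert\det(B)\rvert$ valid for the real symmetric matrix $B$ will then yield that $\mathcal M(B)$ is even. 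Note that invertibility of $B$ guarantees $\det(B)\neq 0$, so this sign is well defined; observe also that, because the argument works for general $\Omega$ directly, no reduction to the case $\Omega=J$ is needed.

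The key step is to read off the sign of $\det(\Omega B)$ from spectral stability. Because $B$ and $\Omega$ are both invertible, $0\notin\sigma(\Omega B)$, while spectral stability places every eigenvalue on the imaginary axis; hence each eigenvalue has the form $\sqrt{-1}\,\beta$ with $\beta\in\R\setminus\{0\}$. Since $\Omega B$ is a real matrix, its (non-real) eigenvalues occur in complex-conjugate pairs $\sqrt{-1}\,\beta$ and $-\sqrt{-1}\,\beta$ with equal algebraic multiplicities, and each such pair contributes $(\sqrt{-1}\,\beta)(-\sqrt{-1}\,\beta)=\beta^2>0$ to the determinant. Grouping all eigenvalues into these conjugate pairs gives $\det(\Omega B)=\prod_{\mathrm{pairs}}\beta^2>0$, whence $\det(B)>0$ and $\mathcal M(B)$ is even.

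The only point that needs care is that spectral stability by itself does not imply that $\Omega B$ is semisimple, so $\Omega B$ may well be defective along the imaginary axis. This, however, is harmless here: the determinant depends only on the algebraic multiplicities of the eigenvalues, and for a real matrix the eigenvalue $\sqrt{-1}\,\beta$ and its conjugate $-\sqrt{-1}\,\beta$ always carry the same algebraic multiplicity, so the conjugate pairing and the resulting positivity go through independently of the Jordan structure. The role of the two hypotheses is then transparent: invertibility of $B$ (together with that of $\Omega$) excludes the eigenvalue $0$, which is precisely what would otherwise spoil the strict positivity of $\det(\Omega B)$, while invertibility of $\Omega$ makes $\det(\Omega)>0$ and lets us transfer the sign to $\det(B)$. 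For consistency with the first proof one could instead reduce to $\Omega=J$ via the remark and compute the spectral flow of $D_s=B+sG$ with $G=\sqrt{-1}J$, but the determinant argument above is shorter and avoids any discussion of crossing regularity.
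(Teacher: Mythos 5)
Your proposal is correct and follows essentially the same route as the paper's own proof: both arguments deduce $\det(\Omega B)>0$ from the pairing of nonzero purely imaginary eigenvalues, use $\det(\Omega)>0$ to conclude $\det(B)>0$, and read off the parity of the Morse index from the sign of $\det(B)$. Your explicit remark that only algebraic multiplicities matter (so semisimplicity is not needed) is a useful clarification, but it does not change the argument.
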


\begin{proof}
The eigenvalues of $\Omega B$ appear in quadruples $\{\lambda, -\lambda, \bar{\lambda}, -\bar{\lambda}\}$  and they have the same algebraic multiplicities \cite{Arn91}.  If $\Omega B$ is spectrally stable and invertible, i.e. all eigenvalues of $\Omega B$ are on the imaginary axis and not equal to zero, then the eigenvalues of $\Omega B$ appear in pairs: $\{\sqrt{-1}a, -\sqrt{-1}a\}$, $a\neq 0$. Thus $\det(\Omega B)>0$. Since $\det(\Omega )>0$, we get $\det(B)>0$. Hence the Morse index of $B$ is even. 
\end{proof}

\begin{remark}
	This argument  is from  Oh  \cite{Oh87}. This result is  also proved in  \cite{BJP14} and \cite{MRT15} by different arguments. 
\end{remark}


 	

\begin{proposition}\label{pro:spec}
Let $W$ be a subspace of $\R^{2n}$ that is both $J$ invariant and $B$ invariant. Assume that $B$ is an isomorphism on $W$. 	 If  $J B$ is spectrally stable, then the Morse index of $B$ restricted on $W$ is even. 
\end{proposition}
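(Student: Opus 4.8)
The plan is to reduce everything to Proposition \ref{pro:original} by restricting $J$ and $B$ to the invariant subspace $W$ and checking that the hypotheses of that proposition survive the restriction. First I would record the algebraic structure on $W$. Since $W$ is $J$ invariant and $J$ is invertible, we have $JW=W$, and for $u,v\in W$ both $Ju,Jv$ lie in $W$ with $\la Ju,v\ra=-\la u,Jv\ra$; hence $J|_W$ is a skew-symmetric, invertible endomorphism of $W$ with respect to the Euclidean inner product inherited from $\R^{2n}$. In particular $\la J\,\cdot,\cdot\ra$ restricts to a nondegenerate skew form on $W$, so $W$ is a symplectic subspace and $\dim W$ is even. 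Likewise, $B$ invariance of $W$ together with the symmetry of $B$ shows that $B|_W$ is a symmetric operator on $W$, and by hypothesis it is an isomorphism, hence invertible.

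Next I would show that spectral stability descends to the $W$-block. Because $W$ is invariant under both $J$ and $B$, it is invariant under $JB$, and $(JB)|_W=(J|_W)(B|_W)$. Choosing a basis of $\R^{2n}$ whose first vectors span $W$, the matrix of $JB$ becomes block triangular with diagonal block $(JB)|_W$, so the characteristic polynomial of $(JB)|_W$ divides that of $JB$ and $\sigma((JB)|_W)\subseteq\sigma(JB)$. If $JB$ is spectrally stable, all its eigenvalues lie on the imaginary axis, hence so do those of $(JB)|_W$; that is, $(JB)|_W$ is spectrally stable as well.

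Finally I would invoke Proposition \ref{pro:original} on the symplectic vector space $W$. Fixing an orthonormal basis of $W$, the matrix $\Omega:=J|_W$ is invertible and skew-symmetric, $B|_W$ is invertible and symmetric, and $\Omega\,(B|_W)=(JB)|_W$ is spectrally stable. Proposition \ref{pro:original} (stated for an arbitrary invertible skew-symmetric $\Omega$) then forces the Morse index of $B|_W$ to be even, which is exactly the assertion.

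The substance of the argument is entirely in the first two steps: verifying that $J|_W$ remains skew-symmetric and $B|_W$ remains symmetric for the induced inner product, and that spectral stability genuinely passes to the invariant block. The main obstacle I anticipate is nothing deep but rather the careful bookkeeping that the restriction preserves both the skew/symmetric structure and the invertibility, since these are precisely the three conditions ($\Omega$ skew-invertible, $B$ symmetric-invertible, $\Omega B$ spectrally stable) required to apply Proposition \ref{pro:original}; once they are in place the conclusion is immediate.
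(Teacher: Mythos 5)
Your proof is correct and follows essentially the same route as the paper: restrict $J$ and $B$ to the invariant subspace $W$, check that the restricted pair still satisfies the hypotheses of Proposition \ref{pro:original} (skew-invertible, symmetric-invertible, spectrally stable product), and conclude. Your choice of an orthonormal basis of $W$ is a minor, and in fact slightly cleaner, variant of the paper's choice of a basis diagonalizing the form induced by $B$.
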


 \begin{proof}Note that the $J$-invariance of $W$ implies that $W$ is even-dimensional. The matrix $B$ induces a nondegenerate symmetric bilinear form on $W$, so $B$ is diagonalizable under some basis of $W$. Denote by  $\tilde B$  the matrix of $B$ under this basis of $W$. Then $\tilde B$ is diagonal and invertible, hence symmetric. Note that $J$ is also an isomorphism on $W$ since 	$J$ is invertible. Hence, $J$  induces a nondegenerate skew-symmetric bilinear form on $W$. Denote by  $\tilde J$  the matrix of $J$ under the above  basis of $W$. 
 	
 	Then $J B$  restricted on its  invariant subspace  $W$ has matrix 
$\tilde J \tilde B$. If   $J B$ is spectrally stable,  so is $\tilde J \tilde B$. Then by Proposition \ref{pro:original},  the Morse index of $B$ restricted on $W$ is even. 
\end{proof}

\begin{remark}\label{rk:sp_in}
Proposition \ref{pro:spec} is also a sufficient condition for spectral instability. That is, let $W$ be a subspace of $\R^{2n}$ that is both $J$ invariant and $B$ invariant. Assume that $B$ is an isomorphism on $W$. 	 If the Morse index of $B$ restricted on $W$ is odd, then $J B$ is spectrally unstable. 
\end{remark}

\begin{proof} [The second  proof of Theorem \ref{thm:Main}] 
Assume that $JB$ is linearly stable.  Then $V=\ker(B)$ is $J$ invariant and $JV=V$ by  Proposition \ref{pro:V-invariant}. 
	
	Firstly, the nullity of $B$ is even by Proposition \ref{pro:V-invariant}.  Secondly, 
	denote by $V^\bot$ the skew-orthogonal complement of $V$, i.e., 
	\[ V^\bot=\{ x\in \R^{2n}| x^T J v =0\  for \  any\  v\in V \}.\] That is,  $V^\bot$ is identical to  the set of vectors Euclidean orthogonal to $JV=V$, hence it is just  the Euclidean orthogonal complement of $V$. Then $V^\bot$ is $B$ invariant and  $B$ is an isomorphism on $V^\bot$. Note that  	$V^\bot$  is also $J$ invariant since 
	\[ v^T J J w=-v^T w =0  \]
for any  $v\in V$, and $w\in V^\bot$.	That $JB$ is linearly stable on $\R^{2n}$ implies that it is  linearly stable on the invariant subspace  $V^\bot$.  By Proposition \ref{pro:spec}, we obtain that $\cM(B)=\cM(B|_{V^\bot})$ is even. 
	
\end{proof}

	

\section{Examples}\label{sec:rmk}

We illustrate the importance of the $J$ and $B$ invariance in Section \ref{sec:eleproof} by one  example. Consider the matrix $JB$, with 
\[ B=\diag\{ b_1, ..., b_n, b_{n+1}, ..., b_{2n}\}.  \]
Let $\{e_1, ..., e_{2n}\}$ be the standard basis of $\R^{2n}$. Denote by $U_k$ the 2-dimensional subspace spanned by $e_k$ and $e_{n+k}$.  Then $U_k$ is both $J$ and $B$ invariant. Restricted on $U_k$,  $JB$ becomes the matrix 
\[ \begin{bmatrix}
	0&-b_{n+k}\\b_{k}&0
\end{bmatrix}.   \] 
\begin{enumerate}
	\item  If $b_kb_{n+k}=0$ and $b^2_k+b^2_{n+k}\ne0$, i.e., the kernel of $B$ is not $J$ invariant, then the normal form of $JB$ on $U_k$ is  $\begin{bmatrix}
		0&1\\0&0
	\end{bmatrix}.$
	\item If $b_kb_{n+k}>0$, i.e., the Morse index of $B$ on $U_k$ is even,  then the normal form of $JB$ on $U_k$ is 
 $\begin{bmatrix}
	\lambda \sqrt{-1}&0\\0&-\lambda \sqrt{-1}
	\end{bmatrix}, \lambda \in \R, \lambda\ne 0$. 
	\item If $b_kb_{n+k}<0$, i.e., the Morse index of $B$ on $U_k$ is odd,  then the normal form of $JB$ on $U_k$ is  
	 $\begin{bmatrix}
	\lambda&0 \\ 0& -\lambda	\end{bmatrix}, \lambda \in \R, \lambda\ne 0$.
\end{enumerate}

{The proof of Theorem \ref{thm:Main} in \cite{BJP14} relies on  the following result (Theorem 3.11, Section 3 of \cite{BJP14})  
\begin{center}
	``If $JB$ is spectrally stable, then $\cM(B)$ is even. ''
\end{center}}
The quoted statement is not true as can be seen by the above discussions. We can construct examples of spectrally stable $JB$ with  $\cM(B)$ being odd. For instance,
\[  B=\diag\{ -2, -1, 1,  -1, 0, 0\}. \]

Finally, let us give an application of Theorem \ref{thm:Main} to the planar n-body-type problem. Consider  a simple mechanical system with configuration manifold being  \[Q=\{q=(q_1,\cdots, q_n)\in \R^{2n}\setminus\D| \sum_{i=1}^n m_i q_i=\textbf{0}\},\] where $\D$ is the collision set. The Riemannian metric on $Q$ is $(v, w)= v^T M w$, where $ M=\diag\{m_1, m_1, m_2, m_2, ..., m_n, m_n \}. $The kinetic energy  is 
$K=\frac{1}{2} p^T M^{-1}p$,  the potential is $V=-\sum^n_{i,j=1, i< j} \frac{m_im_j}{|q_i-q_j|^\a}$,  $ \a>0$, and the symmetry group is $SO(2)$. The momentum map  $\mathbf J$ is the usual angular momentum.  The Newtonian n-body problem is the case when $\a=1$. The locked inertia tensor is $\mathbb I(q)=q^T M q$. The critical points of the  \emph{augmented potential} $V_\xi= V-\frac{\xi^2}{2} \mathbb I$ are called \emph{central configurations}, which lead to relative equilibria of the n-body-type problem. They are also critical points of $U|_{\hat S}$, where  $U=-V$, 
 $S=\{  q\in Q| \mathbb I(q)=1 \}$, and $\hat S=S/S^1$, see \cite{Moe-A} for detail. 

\begin{theorem}
Let $0<\a<2$. 	If the Morse index or the nullity of a central configuration as a critical point of the potential function $U|_{\hat S}$ is odd, then the corresponding relative equilibrium is linearly unstable  on $\mathbf J^{-1}(\mu)/SO(2)$. 
	\end{theorem}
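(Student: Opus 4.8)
The plan is to reduce this final theorem to Corollary \ref{cor:Main}, which already packages the abstract linear-algebra result of Theorem \ref{thm:Main} into the language of a symmetric simple mechanical system. The entire content of the proof is therefore bookkeeping: I must verify that the Morse index (or nullity) of the central configuration \emph{as a critical point of} $U|_{\hat S}$ coincides with the Morse index (resp.\ nullity) of $\delta^2 V_\mu|_{\mathcal V}$, the quadratic form appearing in the corollary. Once this index identification is established, Corollary \ref{cor:Main} delivers the linear instability verbatim.

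First I would unwind the dictionary between the two potentials. For the $n$-body-type problem the momentum group is $SO(2)$, which is one-dimensional and abelian, so $G_\mu = G = SO(2)$ and the locked inertia tensor $\mathbb I(q) = q^T M q$ is scalar-valued. I would write $V_\mu(q) = V(q) + \tfrac12 \mu^2 / \mathbb I(q)$ and compare its restriction to $\mathcal V = (\mk g_\mu \cdot q_e)^\bot$ with $U|_{\hat S}$. Since $U = -V$ and relative equilibria correspond to central configurations (critical points of $V_\xi = V - \tfrac{\xi^2}{2}\mathbb I$ on $Q$, equivalently critical points of $U|_{\hat S}$ on the inertia ellipsoid $\hat S = S/S^1$), the two variational problems live on manifolds of the same dimension. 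The key step is to check that the Hessian $\delta^2(U|_{\hat S})$ at a central configuration and the Hessian $\delta^2 V_\mu|_{\mathcal V}$ differ only by an overall sign coming from $U=-V$ \emph{together with} a compensating sign produced by the amended term, so that the two Morse indices actually \emph{agree} rather than being complementary. I expect this to hinge on the standard identity relating the second variation of the amended potential to that of the force potential restricted to the configuration sphere, valid precisely in the regime $0<\a<2$ where the relevant constants carry the right sign.

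The hypothesis $0<\a<2$ is where the main obstacle lies, and I would isolate exactly why it is needed. The amended potential contains the factor $\tfrac12 \la \mu, \mathbb I^{-1}\mu\ra$, whose second variation involves derivatives of $\mathbb I^{-1}$; the sign of the contribution of the rotational/scaling direction to $\delta^2 V_\mu$ depends on $\a$ through the homogeneity degree $-\a$ of the potential $V$ and the degree $2$ of $\mathbb I$. For $\a<2$ the combined quadratic form in the scaling direction is positive, so that direction does not contribute to the Morse index, and the restriction to $\mathcal V$ (which quotients out the $SO(2)$ symmetry but retains the constraint to the inertia sphere) matches the index of $U|_{\hat S}$. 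I would make this precise by a direct computation of $\delta^2 V_\mu$ along the radial (dilation) direction and along $\mk g_\mu \cdot q_e$, showing both are accounted for correctly, and that the transverse block is $-\delta^2(U|_{\hat S})$ up to a positive scalar, reconciling the Morse indices and nullities.

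Finally, having shown
\EQN{
\mathcal M\!\left(\delta^2 V_\mu|_{\mathcal V}\right) = \mathcal M\!\left(U|_{\hat S}\right),
\qquad
\nu\!\left(\delta^2 V_\mu|_{\mathcal V}\right) = \nu\!\left(U|_{\hat S}\right),
}
I would conclude as follows: if the Morse index or the nullity of the central configuration as a critical point of $U|_{\hat S}$ is odd, then by the above identities the Morse index or nullity of $\delta^2 V_\mu|_{\mathcal V}$ is odd, and Corollary \ref{cor:Main} applied to this symmetric simple mechanical system yields that the corresponding equilibrium $z_e$ of $\mathbf J^{-1}(\mu)/SO(2)$ is linearly unstable. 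I anticipate that the bulk of the write-up is the sign-chasing in the previous paragraph; the reduction itself is immediate once the index dictionary is in place, and the restriction $\a<2$ is the hypothesis that makes the dictionary an equality of indices rather than merely a relation modulo the number of "bad" directions.
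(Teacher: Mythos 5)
Your overall strategy---reduce to Corollary \ref{cor:Main} by comparing the Hessian of $U|_{\hat S}$ with $\delta^2 V_\mu|_{\mathcal V}$---is the same as the paper's, and your treatment of the radial direction is right: for $0<\a<2$ the dilation direction $q$ is an eigenvector of $\delta^2 V_\mu$ with positive eigenvalue $(2-\a)\xi^2$, so it contributes nothing to the index or the nullity. But the central identity you propose to prove is false. On the inertia sphere $S=\{\mathbb I=1\}$ one has $V_\mu = V + \tfrac12\mu^2 = -U + \mathrm{const}$, so $\delta^2 V_\mu|_{T_q\hat S} = -\,\delta^2 U|_{\hat S}$: there is no ``compensating sign'' from the amended term, and on the transverse block the two Hessians are genuinely negatives of one another. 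Consequently the nullities agree but the Morse indices are complementary, not equal; the paper's computation gives
\begin{equation*}
\nu\bigl(\delta^2 V_\mu|_{\mathcal V}\bigr)=\nu\bigl(\delta^2 U|_{\hat S}\bigr),
\qquad
\mathcal M\bigl(\delta^2 V_\mu|_{\mathcal V}\bigr)=2n-4-\nu\bigl(\delta^2 U|_{\hat S}\bigr)-\mathcal M\bigl(\delta^2 U|_{\hat S}\bigr),
\end{equation*}
whereas you assert $\mathcal M(\delta^2 V_\mu|_{\mathcal V})=\mathcal M(U|_{\hat S})$.

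The theorem nevertheless survives, but only via a short parity argument that your route skips: since $2n-4$ is even, if $\nu(\delta^2 U|_{\hat S})$ is odd then $\nu(\delta^2 V_\mu|_{\mathcal V})$ is odd, while if $\nu(\delta^2 U|_{\hat S})$ is even and $\mathcal M(\delta^2 U|_{\hat S})$ is odd then $\mathcal M(\delta^2 V_\mu|_{\mathcal V})$ is odd; in either case Corollary \ref{cor:Main} applies. This case analysis is essential---the disjunction ``Morse index \emph{or} nullity odd,'' present both in the hypothesis and in Corollary \ref{cor:Main}, is precisely what absorbs the complementarity---and any attempt to carry out the sign-chasing you describe so as to make the indices literally equal would fail. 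A minor additional slip: $\mathcal V = T_q\hat S\oplus\mathrm{span}\{q\}$ does not ``retain the constraint to the inertia sphere''; it contains the radial direction, which is exactly why the hypothesis $\a<2$ is needed to keep that direction out of the index count.
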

 This result is originally proved for $\a=1$ in \cite{HS09-1} by spectral flow,   and then generalized to cases of $0\le \a$ in  \cite{BJP14}. Our proof is based on the reduced energy-momentum method.

 \begin{proof} Let $q$ be a critical point of the potential function $U|_{\hat S}$. The aforementioned subspace $\mathcal V$ in Section \ref{sec:intro} is the orthogonal complement of the $SO(2)$ orbit of the configuration, i.e., $\mathcal V=T_q \hat S \oplus \text{span} \{ q\}$,  which is $(2n-3)$-dimensional. Recall that $\delta^2 V_\mu=\delta^2 V_\xi +corr$, where the term $corr$ is defined as 
$\langle\mathbb I(q)^{-1} d\mathbb I(q)\cdot \delta q \xi,  d\mathbb I(q)\cdot \delta q \xi \rangle.$  Since $G=SO(2)$, $\mk g\approxeq \R$, $\mathbb I$ can be regarded as a real function. 
\[  d\mathbb I(q)\cdot \delta q \xi = \frac{d\mathbb I(q + \delta q \xi ) }{d t}|_{t=0} =2(Mq)^T \delta q \xi. \] 
So
\[  \langle\mathbb I(q)^{-1} d\mathbb I(q)\cdot \delta q \xi,  d\mathbb I(q)\cdot \delta q \xi \rangle = \frac{4\xi^2}{\mathbb I(q)} \delta q^TMq (M q)^T \delta q,\] 
and the matrix of the bilinear form $corr$ is the projection matrix $\frac{4\xi^2}{\mathbb I(q)}  Mq (M q)^T$. 
  Thus, the matrix of $\delta^2 V_\mu$ on $\mathcal V$ is simply 
 \[  -M^{-1}D^2 U -\xi^2 I + 4\xi^2 q (M q)^T. \]
The last term  is zero on $T_q \hat S$ and  $4\xi^2 q (M q)^T q= 4\xi^2 \mathbb I (q)  q$.

{Notice that $U$ is homogeneous with degree $-\a$, we have $ DU q=-\a U$ and $ D^2 U q+ DU =-\a DU.$ Since $DU=-\xi^2 Mq$ at the relative equilibrium $q$, we get $ M^{-1}D^2 U q = (\a +1)\xi^2 q.$ Then 
\begin{align*}
&\delta^2 V_\mu q = -(\a+1) \xi^2 q-\xi^2 q+4\xi^2 q=(2-\a)\xi^2 q,\\
&\delta^2 V_\mu|_{T_q \hat S}=  - \delta^2 U|_{\hat S}.
\end{align*}

Thus under the assumption that $0<\a<2$, we have \begin{align*}
&\nu (\delta^2 V_\mu|_\mathcal{V})  = \nu (\delta^2 U|_{\hat S}), \\
& \mathcal M (\delta^2 V_\mu|_\mathcal{V})  =2n-4- \nu (\delta^2U|_{\hat S}) -\mathcal M(\delta^2 U|_{\hat S}).
\end{align*}
}
Hence, that $\mathcal M(\delta^2 U|_{\hat S})$ or $\nu( \delta^2 U|_{\hat S})$ is odd implies that $\mathcal M(\delta^2 V_\mu|_\mathcal V)$ or $\nu(\delta^2 V_\mu|_\mathcal V)$ is odd; then by Theorem \ref{thm:Main} the relative equilibrium is linearly unstable. 

\end{proof}

We remark that for n-body type problem with potential  $V=-\sum^n_{i,j=1, i< j} \frac{m_im_j}{|q_i-q_j|^\a}$,  $ \a\ge 2$, a relative equilibrium is always linearly unstable on $\mathbf J^{-1}(\mu)/SO(2)$. There is a $4$-dimensional subspace  of $T_{(p_e, q_e)} T^*Q$, 
$$E_{1}=span\{ (q, 0),(0, Mq),( q^\bot,0),(0, M q^\bot)\}, \ q^\bot=(-y_1, x_1, ..., -y_n ,x_n),  $$ 
  (cf. Section 5.2  of  \cite{BJP14}), the linearization of the Hamiltonian vector field on $E_1$ has the matrix \[\left(
\begin{array}{cccc}
0 & -\xi  & 1 & 0 \\
\xi  & 0 & 0 & 1 \\
(\a+1) \xi ^2 & 0 & 0 & -\xi  \\
0 & -\xi ^2 & \xi  & 0 \\
\end{array}
\right).\]  
The  eigenvalues are 
$ \left\{0,0,-\sqrt{\a-2} \xi ,\sqrt{\a-2} \xi \right\}$. One can check that if $\a=2$, this matrix is similar to $\left(
\begin{array}{cccc}
0 & 1 & 0 & 0 \\
0 & 0 & 1 & 0 \\
0 & 0 & 0 & 1 \\
0 & 0 & 0 & 0 \\
\end{array}
\right)$.  If $\a>2$, there are positive   eigenvalues.

However, this instability is  from the  invariance of the system  under the space-time scaling transform, $(q, t)\mapsto (\tilde{q},s)$ with 
\[  \tilde{q}=\lambda q, s=\lambda^b t, \quad b=\frac{2+\a}{2}.\]
Hence, it is a common  practice that we study the linear stability on $E_2$, the symplectic complement of $E_1$, see \cite{Moe-A}, which is a $(4n-8)$-dimensional subspace of $T_{z_e}J^{-1}(\mu)/SO(2)$. On $E_2$, the linearized Hamiltonian vector field is $\Omega (\delta^2 V_\mu + \delta^2 K_\mu)$, with $\delta^2 V_\mu$ restricted on $T_{q_e}  \hat S$.  Thus, we reestablish the following result of \cite{BJP14}.  
	\begin{corollary}
Let $0<\a$. 	If the Morse index or the nullity of a central configuration as a critical point of the potential function $U|_{\hat S}$ is odd, then the corresponding relative equilibrium is linearly unstable on $E_2$. 
	\end{corollary}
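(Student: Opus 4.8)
The plan is to deduce this corollary from Theorem \ref{thm:Main} by essentially the same reduction used in the preceding theorem, but carried out on the symplectic complement $E_2$ rather than on $\mathcal V$; this is precisely what removes the restriction $\a<2$. Taking as given the description stated just above the corollary, that the linearized Hamiltonian vector field on $E_2$ is $\Omega(\delta^2 V_\mu+\delta^2 K_\mu)$ with $\delta^2 V_\mu$ restricted to $T_{q_e}\hat S$, I would set $B:=(\delta^2 V_\mu+\delta^2 K_\mu)|_{E_2}$ and show that $\mathcal M(B)$ or $\nu(B)$ is odd whenever $\mathcal M(\delta^2 U|_{\hat S})$ or $\nu(\delta^2 U|_{\hat S})$ is odd. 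Theorem \ref{thm:Main}, in the version with $J$ replaced by the invertible skew-symmetric $\Omega$ (justified in the remark following that theorem), then gives that $\Omega B$ is linearly unstable on $E_2$.

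First I would record the block structure coming from the reduced energy-momentum method: in the adapted coordinates $B$ is block-diagonal, with position block $\delta^2 V_\mu|_{T_{q_e}\hat S}$ of size $2n-4$ and momentum block $\delta^2 K_\mu$ positive definite. Since $\delta^2 K_\mu$ contributes nothing to either the Morse index or the nullity, this gives $\nu(B)=\nu(\delta^2 V_\mu|_{T_{q_e}\hat S})$ and $\mathcal M(B)=\mathcal M(\delta^2 V_\mu|_{T_{q_e}\hat S})$. Next I would invoke the identity $\delta^2 V_\mu|_{T_{q_e}\hat S}=-\delta^2 U|_{\hat S}$, which was already derived in the proof of the previous theorem and, crucially, does not use $\a<2$: the $\a$-dependent factor $(2-\a)\xi^2$ appeared only along the direction $\text{span}\{q\}$, which is excluded from $T_{q_e}\hat S$. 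As $\dim T_{q_e}\hat S=2n-4$ is even, complementing the bilinear form yields $\nu(B)=\nu(\delta^2 U|_{\hat S})$ and $\mathcal M(B)=(2n-4)-\nu(\delta^2 U|_{\hat S})-\mathcal M(\delta^2 U|_{\hat S})$, so that $\mathcal M(B)\equiv\nu(\delta^2 U|_{\hat S})+\mathcal M(\delta^2 U|_{\hat S})\pmod 2$.

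The parity bookkeeping then closes the argument: if $\nu(\delta^2 U|_{\hat S})$ is odd, then $\nu(B)$ is odd; if $\mathcal M(\delta^2 U|_{\hat S})$ is odd, then either $\nu(\delta^2 U|_{\hat S})$ is odd, whence $\nu(B)$ is odd, or $\nu(\delta^2 U|_{\hat S})$ is even, whence $\mathcal M(B)\equiv\mathcal M(\delta^2 U|_{\hat S})$ is odd. In every case $\mathcal M(B)$ or $\nu(B)$ is odd, and Theorem \ref{thm:Main} finishes. The only genuinely delicate point is the first step: one must be certain that the reduced energy-momentum block-decomposition really applies on $E_2$ and that $\delta^2 K_\mu$ is positive definite there, so that the parity of $B$ is governed entirely by $\delta^2 V_\mu|_{T_{q_e}\hat S}$; the remaining two-case parity analysis, and the observation that the key identity is $\a$-independent, are routine once that structural fact is in hand.
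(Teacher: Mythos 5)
Your proposal is correct and follows essentially the same route the paper takes (implicitly, from the discussion preceding the corollary): use the block-diagonal form $\Omega(\delta^2 V_\mu+\delta^2 K_\mu)$ on $E_2$ with $\delta^2 K_\mu$ positive definite, the $\a$-independent identity $\delta^2 V_\mu|_{T_{q_e}\hat S}=-\delta^2 U|_{\hat S}$ on the even-dimensional space $T_{q_e}\hat S$, and then Theorem \ref{thm:Main} with $J$ replaced by $\Omega$. The parity bookkeeping you carry out matches the paper's computation in the preceding theorem.
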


 \section*{acknowledgments}
We would like to thank Cristina Stoica and Shanzhong Sun for discussions and careful readings of the manuscript. 
Shuqiang Zhu would like to acknowledge NSFC(No.11801537) and China Scholarship Council (CSC NO. 201806345013), and 
Instituto Tecnol\'{o}gico Aut\'{o}nomo de M\'{e}xico for their warm hospitality.

\end{document}